\theoremstyle{remark}{
\newtheorem{Def}{{\rm Definition}}
\newtheorem{Ex}{{\rm Example}}

\newtheorem{Prob}{{\rm Problem}}

}
\theoremstyle{plain}
{

\newtheorem{Prop}{Proposition}
\newtheorem{Thm}{Theorem}

}
\begin{document}
\title[Morse functions with prescribed preimages on $3$-dimensional manifolds]{On reconstructing Morse-Bott functions with prescribed preimages on $3$-dimensional manifolds and conditions for the reconstruction}
\author{Naoki kitazawa}
\keywords{Morse-Bott functions. Surfaces. $3$-dimensional manifolds. \\
\indent {\it \textup{2020} Mathematics Subject Classification}: Primary~57R45. Secondary~57R19.}

\address{Osaka Central Advanced Mathematical Institute (OCAMI) \\
	3-3-138 Sugimoto, Sumiyoshi-ku Osaka 558-8585
	TEL: +81-6-6605-3103
}
\email{naokikitazawa.formath@gmail.com}
\urladdr{https://naokikitazawa.github.io/NaokiKitazawa.html}
\maketitle
\begin{abstract}
We present conditions for reconstruction of Morse-Bott functions with prescribed preimages on $3$-dimensional manifolds. The present work strengthens a previous result for the Morse function case by the author and present a related example as another result. 

This shows a new result on reconstruction of nice smooth functions such that preimages are as prescribed. Such a study has been fundamental, natural, and surprisingly, founded recently, in 2006, by Sharko. Reconstruction of nice smooth functions on closed surfaces has been followed by Masumoto-Saeki, for example. Later, Gelbukh, Marzantowicz, Michalak, and so on, are studying Morse function cases further. The author has started explicit studies for $3$-dimensional cases respecting topologies of preimages of single points and obtained several results. We add another result on this.   
\end{abstract}
\section{Introduction.}
\label{sec:1}

{\it Morse-Bott} functions have been fundamental and important tools and objects in geometry of manifolds.
Our present paper discusses a kind of fundamental problems (Problem \ref{prob:1}). First we explain related fundamental notions and notation.

Let ${\mathbb{R}}^n$ denote the $n$-dimensional Euclidean space. This is a Riemannian manifold with the standard Euclidean metric: let $||p|| \geq 0$ denote the distance between $p \in {\mathbb{R}}^k$ and the origin $0$. Let $\mathbb{R}:={\mathbb{R}}^1$. The $k$-dimensional unit sphere $S^k:=\{x \in {\mathbb{R}}^{k+1}\mid ||x||=1\}$ and the $k$-dimensional unit disk $D^k:=\{x \in {\mathbb{R}}^k \mid ||x|| \leq 1\}$ are defined. For a smooth manifold $X$, let $T_pX$ denote its tangent vector space at $p$. For a smooth map $c:X \rightarrow Y$ between smooth manifolds, a {\it singular point} $p \in X$ of the map means a point where the rank of the differential ${dc}_p:T_pX \rightarrow T_{c(p)}Y$ there, a linear map, is smaller than both the dimensions of the manifolds. The {\it singular value} $c(p)$ of $c$ means a value realized as a value at a singular point of $c$.
A {\it Morse} function $c$ is a smooth function such that their singular points are all in the interior of the manifold and that at each singular point $p$ of $c$, for suitable local coordinates, we have $c(x_1,\cdots x_m)={\Sigma}_{j=1}^{m-i(p)} {x_j}^2-{\Sigma}_{j=1}^{i(p)} {x_{m-i(p)+j}}^2+c(p)$. 
This integer $i(p)$ is uniquely defined and called the {\it index of p for $c$}. 
A {\it Morse-Bott} function is a smooth function which is, at each singular point of $p$, represented as the composition of a smooth map with no singular point (a {\it submersion}) with a Morse function. A Morse function is also a Morse-Bott function: the set of all singular points of it is a smooth regular submanifold with no boundary and in the Morse function case it is a discrete set. \cite{milnor1, milnor2} explain related fundamental theory of Morse functions.
\begin{Prob}
\label{prob:1}
Let $m \geq 2$ be an integer, $a<s<b$ three real numbers, $F_{a}$ and $F_{b}$ {\rm (}$m-1${\rm )}-dimensional smooth closed manifolds.
Can we reconstruct an $m$-dimensional compact and connected manifold ${\tilde{M}}_{F_{a},F_{b}}$ and a Morse(-Bott) function ${\tilde{f}}_{F_{a},F_{b}}:{\tilde{M}}_{F_{a},F_{b}} \rightarrow \mathbb{R}$ with the following properties.
\begin{enumerate}
\item The image is $[a,b]:=\{t \mid a<t<b\}$. There exists a unique singular value of ${\tilde{f}}_{F_{a},F_{b}}:{\tilde{M}}_{F_{a},F_{b}} \rightarrow \mathbb{R}$, which is $s$.
\item The preimage ${{\tilde{f}}_{F_{a},F_{b}}}^{-1}(a)$ (${{\tilde{f}}_{F_{a},F_{b}}}^{-1}(b)$) is diffeomorphic to $F_{a}$ (resp. $F_{b}$) and the preimage ${{\tilde{f}}_{F_{a},F_{b}}}^{-1}(s)$ is connected. 

\item The boundary of the manifold ${\tilde{M}}_{F_{a},F_{b}}$ of the domain is ${{\tilde{f}}_{F_{a},F_{b}}}^{-1}(a) \sqcup {{\tilde{f}}_{F_{a},F_{b}}}^{-1}(b)$.
\end{enumerate}
\end{Prob}

This is fundamental and seems to be classical. However, surprisingly, such a study was founded recently, in 2006, by Sharko (\cite{sharko}). \cite{gelbukh1, gelbukh2, michalak1} have affirmatively solved this in the case $m=2$. See \cite{marzantowiczmichalak, michalak2} as related studies for example. The author has founded this problem for the $m \geq 3$ case and obtained several explicit affirmative results (\cite{kitazawa1, kitazawa2, kitazawa3, kitazawa4}). Among them, we only assume several knowledge and arguments on the article \cite{kitazawa2}. We also refer to the preprint \cite{kitazawa4}.
\subsection{Closed surfaces.}
We assume fundamental knowledge on closed surfaces. We also review several important arguments and facts.
The {\it genus} of a closed, connected and (non-)orientable surface $F$ is a topological invariant for such surfaces.
A closed and connected surface $F$ is diffeomorphic to a surface of the form ${\sharp} {\sharp}_{j_1=1}^{l_1} (S^1 \times S^1) \sharp {\sharp}_{j_2=1}^{l_2} {\mathbb{R}P}^2$.
For the orientable case, the genus is $l_1$ here and $l_2=0$. The genus $0$ case is for the sphere $S^2$. 
For the non-orientable case, the genus is $l_2>0$ under the constraint $l_1=0$. We adopt notation from \cite{kitazawa4} and explain this. We define a non-negative integer $P(F)$ for a closed surface $F$ which may not be connected in the additive way with the following rule for the connected case. We define $P(F)=0$ if $F$ is connected and orientable,and $P(F)>0$ the genus of $F$ if $F$ is connectd and non-orientable.
We can define $P(F):={\Sigma}_{j=1}^{l} P(F_j)$ with $F:={\sqcup}_{j=1} F_j$ where $F_j$ is a connected component of $F$. In this representation let $P_0(F)$ be the number of connected components $F_j$ with $P(F_j)$ being odd.
\subsection{Our result.}
\begin{Thm}[In \cite{kitazawa4} the Morse function case was shown where "$3P(F_a)$" and "$3P(F_b)$" are $P(F_a)$ and $P(F_b)$ respectively: this was also shown to be sufficient in the Morse function case and for this see also \cite{kitazawa2}]
\label{thm:1}
If Problem \ref{prob:1} is solved affirmatively in the case of Morse-Bott functions of the {\it class M}, defined later in Definition \ref{def:1}, with $m=3$, then the value $P_{\rm o}(F_b)-P_{\rm o}(F_a)$ is even and 
we have the relations $P_{\rm o}(F_b) \leq 3P(F_a)$ and $P_{\rm o}(F_a) \leq 3P(F_b)$.
\end{Thm}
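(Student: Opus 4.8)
The plan is to treat the two conclusions separately; the parity statement turns out to be soft, and the real content is in the inequalities.

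\smallskip
\noindent\emph{Parity.} Write $\tilde M$ for ${\tilde{M}}_{F_a,F_b}$ and $\tilde f$ for ${\tilde{f}}_{F_a,F_b}$. I would use only condition (3). Since $\partial\tilde M=F_a\sqcup F_b$ and $\tilde M$ is a compact $3$-manifold, the double $D\tilde M=\tilde M\cup_{\partial\tilde M}\tilde M$ is a closed $3$-manifold, so $\chi(D\tilde M)=0$; by inclusion--exclusion $0=2\chi(\tilde M)-\chi(\partial\tilde M)$, hence $\chi(F_a)+\chi(F_b)=\chi(\partial\tilde M)=2\chi(\tilde M)$ is even. On the other hand, for every closed surface $F$ one has $\chi(F)\equiv P_{\rm o}(F)\pmod 2$: an orientable component has even Euler number, while a non-orientable component of genus $g$ has $\chi=2-g$, whose parity equals that of $g$, i.e.\ it is odd precisely when the component is one of those counted by $P_{\rm o}$. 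Therefore $P_{\rm o}(F_a)+P_{\rm o}(F_b)\equiv\chi(F_a)+\chi(F_b)\equiv 0\pmod 2$, which gives the first assertion. Note that this step uses nothing about the Morse--Bott structure.

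\smallskip
\noindent\emph{Setup for the inequalities.} By the involution $t\mapsto a+b-t$ of $\mathbb{R}$, which preserves all hypotheses of Problem~\ref{prob:1} and of the class-M condition while interchanging the roles of $a$ and $b$, it suffices to prove $P_{\rm o}(F_b)\le 3P(F_a)$. Because $s$ is the unique singular value, $\tilde f^{-1}([a,s-\varepsilon])$ and $\tilde f^{-1}([s+\varepsilon,b])$ are products, so $F_a\cong N_-:=\tilde f^{-1}(s-\varepsilon)$ and $F_b\cong N_+:=\tilde f^{-1}(s+\varepsilon)$, and the entire problem is to control how the regular level surface $N_+$ arises from $N_-$ upon crossing the single singular level $F_s:=\tilde f^{-1}(s)$. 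For this I would invoke the local normal forms of the class-M Morse--Bott critical submanifolds from Definition~\ref{def:1}: each is a point or a circle, of one of the finitely many admissible types, and for each type one writes down explicitly the induced surgery on the level surface together with its effect on the number of components of odd Euler number and on the total non-orientable genus $P$. The crucial structural input is that the natural surjections $N_\pm\to F_s$ (collapsing the ascending, resp.\ descending, disks of the critical submanifolds) have their non-constant fibres concentrated over those submanifolds, so the hypothesis that $F_s$ is \emph{connected} forces the admissible surgeries to tie $N_-$ and $N_+$ together.

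\smallskip
\noindent\emph{The bookkeeping, and the main obstacle.} Expressing $F_b=N_+$ as the outcome of the admissible moves applied to $F_a=N_-$, the point is to prove two things. First, an odd-genus non-orientable component of $F_b$ can only be produced by "spending" non-orientability already present in $F_a$: no admissible move manufactures odd non-orientability out of an orientable connected level surface, and the connectedness of $F_s$ blocks the pattern in which non-orientability is first created and then split off into many separate components — this is exactly what excludes $3$-manifolds such as $(\mathbb{R}P^2\times[0,1])\setminus(\text{open }3\text{-ball})$, for which $\partial$ decomposes as $S^2\sqcup(\mathbb{R}P^2\sqcup\mathbb{R}P^2)$ and the naive inequality would fail. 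Second, the "exchange rate'' is at most $3$: each unit of $P(F_a)$ can account for at most one inherited odd-genus component of $F_b$ together with at most two further ones produced by twisted (non-orientable-normal-bundle) class-M moves attached to it. Making this precise — establishing the (sub)additivity of $P$ along the passage through the connected singular level $F_s$, and verifying that the worst admissible move yields the constant $3$ and no larger — is where essentially all the work lies; in the Morse function case of \cite{kitazawa2,kitazawa4} only the four Morse point types occur, the exchange rate there is $1$, and one recovers the sharper bounds $P_{\rm o}(F_b)\le P(F_a)$ and $P_{\rm o}(F_a)\le P(F_b)$.
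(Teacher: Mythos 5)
Your parity argument is fine (it is the paper's cobordism remark made explicit), and the reduction by symmetry to $P_{\rm o}(F_b)\le 3P(F_a)$ matches the paper, which replaces ${\tilde{f}}_{F_a,F_b}$ by $-{\tilde{f}}_{F_a,F_b}$. But for the inequality itself you only describe a plan and then explicitly defer ``essentially all the work'': you never establish the subadditivity of $P$ through the singular level, nor where the constant $3$ comes from. That is precisely the content of the theorem, so as it stands the proposal has a genuine gap rather than a proof.

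The missing ingredients, which are what the paper actually uses, are concrete and short. By the handle description of ${\tilde{M}}_{F_a,F_b}$ (subsection \ref{subsec:2.2}) and the class-M condition (Definition \ref{def:1}), the only circle-parametrized families of $1$-handles that occur are attached along small collar neighborhoods $D_{4,j}$ of boundaries of pairwise disjoint embedded M\"obius bands ${D_{4,j}}^{\prime}\subset F_a\times\{1\}$ (the $D_{3,j}$-type families are excluded). Hence: (i) by Proposition \ref{prop:1} (\ref{prop:1.2}) the number $l_4$ of such families satisfies $l_4\le P(F_a)$, since their core circles are disjoint circles in $F_a$ with twisted tubular neighborhoods; (ii) each such family splits off a Klein bottle from the component it is attached to, so $P(F_{s,1})=P(F_a)+2l_4$ while $P_{\rm o}(F_{s,1})=P_{\rm o}(F_a)$; (iii) the remaining handles, the index-$2$ handles and the ordinary index-$1$ handles, never increase $P$, giving $P_{\rm o}(F_b)\le P_{\rm o}(F_{s,3})\le P(F_{s,3})\le P(F_{s,1})$. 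Chaining these yields $P_{\rm o}(F_b)\le P(F_a)+2P(F_a)=3P(F_a)$. Note also that your heuristic ``worst single admissible move has exchange rate $3$'' is not how the bound arises: no single move multiplies non-orientability by $3$; rather the factor $3$ is $1+2\cdot 1$, coming from the global disjointness bound $l_4\le P(F_a)$ on M\"obius bands in $F_a$ combined with the $+2$ contribution of each twisted family. Your appeal to connectedness of the singular level to ``block'' examples like $(\mathbb{R}P^2\times[0,1])$ minus a ball is likewise not the mechanism used; the restriction that does the work is the class-M requirement that the twisted families sit over disjoint M\"obius bands inside $F_a$ itself.
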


We prove this in the next section. Main ingredients of our proof are singular points of Morse-Bott functions and corresponding handles. \cite{milnor2} explains related classical fundamental theory systematically. Note that for the Morse-Bott function case, we can argue similarly and naturally. We expect readers to have related knowledge. 
Most of our arguments here depend on one from \cite{kitazawa4}. However, we also need additional arguments on non-orientable surfaces. We also present an explicit case we have found newly here as Theorem \ref{thm:2}.

\section{A proof of Theorem \ref{thm:1}.}
\subsection{Additional fundamental facts on closed non-orientable surfaces.}
\label{subsec:2.1}
We present additional fundamental arguments on compact non-orientable surfaces.

For a circle $S^1$ smoothly embedded in a closed surface, its closed tubular neighborhood may not be diffeomorphic to $S^1 \times D^1$ and it is a so-called M\"obius band. Let the M\"obius band be denoted by $S^1 \tilde{\times} D^1$. This is also obtained by removing the interior of a smoothly embedded copy $D^2 \subset {\mathbb{R}P}^2$. This is also regarded as a bundle whose fiber is $D^1$ and whose structure group is isomorphic to the group of order $2$. We can define a smooth map $s_{\rm M}:S^1 \rightarrow S^1 \tilde{\times} D^1$ of the bundle such that the composition with the projection of the bundle is the identity map on $S^1$ and that the value at each point is the value corresponding canonically to the origin $0 \in D^1$ of the fiber: this is a so-called section and we call this the {\it canonical section of the M\"obius band}.
\begin{Prop}
\label{prop:1}
\begin{enumerate}
\item \label{prop:1.1} A circle smoothly embedded in ${\mathbb{R}P}^2$ has a closed tubular neighborhood diffeomorphic to $S^1 \times D^1$ if and only if it bounds a smoothly embedded disk $D^2$.
\item \label{prop:1.2}
For a closed surface $F$ with $P(F)=l$ which may not be connected, 
we can choose $l^{\prime}$ mutually disjoint smoothly embedded circles whose closed tubular neighborhoods are diffeomorphic to $S^1 \tilde{\times} D^1$ in $F$ if and only if $l^{\prime}$ is a non-negative integer $0 \leq l^{\prime} \leq l$.
\end{enumerate}
\end{Prop}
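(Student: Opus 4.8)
The plan is to handle the two assertions separately, using only elementary surface topology: two-sidedness of boundary curves, invariance of the Euler characteristic under cutting along circles, and the classification of compact surfaces.

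For Part (\ref{prop:1.1}), the "if" direction is immediate: if the embedded circle $C$ bounds a smoothly embedded disk $D \cong D^2$ in ${\mathbb{R}P}^2$, then $D$ witnesses that $C$ is two-sided, so its closed tubular neighborhood is an orientable $D^1$-bundle over $S^1$, hence diffeomorphic to $S^1 \times D^1$. For the converse I would let $N \cong S^1 \times D^1$ be such a neighborhood and set $B$ to be the closure of ${\mathbb{R}P}^2 \setminus N$, a compact surface meeting $N$ along $\partial N \cong S^1 \sqcup S^1$. From $\chi({\mathbb{R}P}^2) = \chi(N) + \chi(B) - \chi(\partial N)$ I get $\chi(B) = 1$; since ${\mathbb{R}P}^2$ is connected, $B$ has no closed component, and $B$ cannot be connected (capping its two boundary circles with disks would give a closed surface of Euler characteristic $3$), so $B = B_1 \sqcup B_2$ with one boundary circle each and $\chi(B_1) + \chi(B_2) = 1$. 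A connected compact surface with nonempty boundary has Euler characteristic at most $1$, with equality only for the disk, so one of $B_1, B_2$, say $B_1$, is a disk; pushing $C$ across the half-collar inside $N$ onto the boundary circle of $B_1$ exhibits $C$ as bounding a smoothly embedded disk.

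For Part (\ref{prop:1.2}), the "if" direction is constructive: writing $F = \sqcup_j F_j$, I realize each non-orientable component $F_j \cong \sharp_{i=1}^{P(F_j)} {\mathbb{R}P}^2$ as $S^2$ with $P(F_j)$ disjoint open disks removed and $P(F_j)$ M\"obius bands glued in; the images of their canonical sections give $P(F_j)$ mutually disjoint embedded circles with M\"obius-band neighborhoods. Given $0 \leq l' \leq l = \sum_j P(F_j)$, I choose integers $c_j$ with $0 \leq c_j \leq P(F_j)$ and $\sum_j c_j = l'$ and keep $c_j$ of these circles in each $F_j$ (for $l'=0$ the empty collection works). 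For the converse, given mutually disjoint circles $C_1,\dots,C_{l'}$ with M\"obius-band neighborhoods, each $C_i$ is one-sided and so lies in a non-orientable component; letting $c_j$ be the number of them lying in $F_j$, it suffices to prove $c_j \leq P(F_j)$, since then $l' = \sum_j c_j \leq \sum_j P(F_j) = P(F) = l$. To get $c_j \leq P(F_j)$, I would cut $F_j$ successively along these $c_j$ circles, which stay disjoint and one-sided at each stage, obtaining a connected compact surface $F_j'$ (a one-sided circle in a connected surface is non-separating) with exactly $c_j$ boundary circles and $\chi(F_j') = \chi(F_j) = 2 - P(F_j)$, since cutting along a circle does not change the Euler characteristic; capping the $c_j$ boundary circles with disks gives a closed connected surface of Euler characteristic $2 - P(F_j) + c_j \leq 2$, whence $c_j \leq P(F_j)$.

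The main thing requiring care is this last step of Part (\ref{prop:1.2}): one must verify that cutting along a one-sided circle adds exactly one new boundary circle, leaves the Euler characteristic unchanged, and preserves connectedness — the last point because a separating circle always has an annular, hence trivial, tubular neighborhood, contradicting one-sidedness. An alternative that sidesteps this cutting bookkeeping is homological: since the $C_i$ are disjoint and each one-sided, the $\mathbb{Z}/2\mathbb{Z}$-valued intersection form of $F_j$ restricts to the identity matrix on the classes $[C_1],\dots,[C_{c_j}] \in H_1(F_j;\mathbb{Z}/2\mathbb{Z})$, so these classes are linearly independent and $c_j \leq \dim_{\mathbb{Z}/2\mathbb{Z}} H_1(F_j;\mathbb{Z}/2\mathbb{Z}) = P(F_j)$.
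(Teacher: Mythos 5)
Your proof is correct. Note that the paper itself offers no argument for Proposition 1: it is stated in subsection 2.1 as one of the ``additional fundamental facts'' on non-orientable surfaces, so there is no proof of record to compare against, and your write-up actually supplies more detail than the source. Both of your routes are sound. In part (1), the ``if'' direction is better justified homologically than by the phrase ``$D$ witnesses two-sidedness'': since $C=\partial D$ we have $[C]=0$ in $H_1({\mathbb{R}P}^2;\mathbb{Z}/2\mathbb{Z})$, hence $[C]\cdot[C]=0$, hence the normal bundle is trivial; your Euler-characteristic analysis of the complement for the converse (no closed components, at most two components since $\partial N$ has two circles, capping to exclude the connected case, $\chi\leq 1$ with equality only for the disk) is complete. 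In part (2), the cut-and-cap bookkeeping you flag is exactly the point that needs care, and your justifications are right: a circle with M\"obius-band neighborhood is one-sided, hence non-separating, cutting along it adds a single boundary circle and preserves $\chi$, and the remaining circles keep their M\"obius neighborhoods after each cut, so capping gives $2-P(F_j)+c_j\leq 2$. The mod-2 intersection-form argument you give as an alternative is the cleanest version and sidesteps all of this, using only that disjointness gives off-diagonal zeros, one-sidedness gives diagonal ones, and $\dim_{\mathbb{Z}/2\mathbb{Z}}H_1(F_j;\mathbb{Z}/2\mathbb{Z})=P(F_j)$ for a closed non-orientable surface of genus $P(F_j)$; it also meshes well with how the paper uses $P$ throughout. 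The constructive ``if'' direction via the $P(F_j)$ crosscap cores is exactly the standard realization and is what the paper implicitly relies on (the canonical sections of the M\"obius bands it introduces just before the Proposition).
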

\subsection{The manifold ${\tilde{M}}_{F_a,F_b}$ of Problem \ref{prob:1} with $m=3$.}
\label{subsec:2.2}
Singular points of Morse(-Bott) functions and one-to-one 
correspondence between them and handles (resp. families of handles parametrized by smooth regular submanifolds) are important. In short, a singular point of index $k$ for the Morse function on an $m$-dimensional manifold and a so-called {\it$k$-handle}, diffeomorphic to $D^k \times D^{m-k}$ and realized as a smooth submanifold of the manifold of the domain, are corresponded. This is one of important theory in our proof. For the boundary of a manifold $X$, let us use $\partial X$. In terms of (parametrized families of) handles, we discuss the structure of ${\tilde{M}}_{F_a,F_b}$ as we do in \cite{kitazawa4}.
\begin{itemize}
	\item First we prepare the product $F_a \times D^1=F_a \times [0,1]$ where $F_a$ and $F_a \times \{0\}$ are identified by the map $i(x)=(x,0)$.
	\item We choose suitable finitely many disjoint copies smoothly embedded in $F_a \times \{1\}$ in the following.
	\begin{itemize}
		
		\item $D_{1,j}:=D^2 \sqcup D^2$.
		\item $D_{2,j}:=S^1 \times D^1$. 
		\item $D_{3,j}:=S^1 \times (D^1 \sqcup D^1)$.
		\item $D_{4,j}:=S^1 \times D^1$.
	\end{itemize}
	
	\item We attach (parametrized families of) handles as follows. Let the "label $j$" in $D_{i,j}$ ($i=1,2,3,4$) be a positive integer $1 \leq j \leq l_i$ for the order where the integer $l_i$ is the number of copies $D_{i,j}$. 
	\begin{itemize}
		\item  
		We attach a natural family $S^1 \tilde{\times} D^1 \times D^1$ of $1$-handles parametrized by $S_{\rm M}(S^1) \subset S^1 \tilde{\times} D^1$ to $D_{4,j}$ along $\partial(S^1 \tilde{\times} D^1) \times D^1$, which is naturally regarded to be $S^1 \times S^1$, naturally. This corresponds to a circle $C_{j}$ of the set of all singular points of the function ${\tilde{f}}_{F_{a},F_{b}}:{\tilde{M}}_{F_{a},F_{b}} \rightarrow \mathbb{R}$. We attach the families one after another. 
		
		After attaching these families, we have a new $3$-dimensional smooth compact manifold (, smooth the corner,) and let the complementary set of $F_a$ of the boundary be denoted by $F_{s,1}$.  
		\item We attach a natural family $S^1 \times D^1 \times D^1$ of $1$-handles parametrized by $S^1 \times \{0\} \subset S^1 \times D^1$ to $D_{3,j}$ along $S^1 \times \partial D^1 \times D^1=S^1 \times (D^1 \sqcup D^1)$ naturally. This also corresponds to a circle $C_{j,0}$ of the set of all singular points of the function ${\tilde{f}}_{F_{a},F_{b}}:{\tilde{M}}_{F_{a},F_{b}} \rightarrow \mathbb{R}$. We attach the families one after another.
		After attaching these families, we have a new $3$-dimensional smooth compact manifold (, smooth the corner,) and let the complementary set of $F_a$ of the boundary be denoted by $F_{s,2}$.
		\item We attach a $2$-handle $D^2 \times D^1$ to $D_{2,j}$ along $\partial D^2 \times D^1$ naturally. This also corresponds to a singular point of index $2$ for the function ${\tilde{f}}_{F_{a},F_{b}}:{\tilde{M}}_{F_{a},F_{b}} \rightarrow \mathbb{R}$. We attach the handles one after another. The $j$-th handle decomposes a connected component $F_{s,2,j-1,{\rm c}}$ of the existing surface $F_{s,2,j-1}$ into a connected summand of two closed and connected surface or change it into another connected surface: we put $F_{s,2,0}:=F_{s,2}$ and the resulting whole surface obtained from $F_{s,2,j-1}$ is denoted by $F_{s,2,j}$. After attaching the handles, we have a new $3$-dimensional smooth compact manifold (, smooth the corner,) and let the complementary set of $F_a$ of the boundary be denoted by $F_{s,3}$. For example, we have the relations $P(F_{s,3})=P(F_{s,2})-2k_3$ for some non-negative integer $k_3$: this is also a kind of fundamental exercises on topological theory of closed surfaces and presented more precisely in \cite[Our proof of Theorem 1]{kitazawa4}.

		\item We attach a $1$-handle $D^1 \times D^2$ to $D_{1,j}$ along $\partial D^1 \times D^2$ naturally. This also corresponds to a singular point of index $1$ for the function ${\tilde{f}}_{F_{a},F_{b}}:{\tilde{M}}_{F_{a},F_{b}} \rightarrow \mathbb{R}$. We attach the handles one after another. The $j$-th handle connects two components $F_{s,3,j-1,{\rm c}_1}$ and $F_{s,3,j-1,{\rm c}_2}$ of the existing surface $F_{s,3,j-1}$ or change a connected component $F_{s,3,j-1,{\rm c}}$ into another connected surface: we put $F_{s,3,0}:=F_{s,3}$ and the resulting whole surface obtained from $F_{s,3,j-1}$ is denoted by $F_{s,3,j}$. After attaching the handles, we have a new $3$-dimensional smooth compact manifold (, smooth the corner,) and this is diffeomorphic to ${\tilde{M}}_{F_a,F_b}$. The complementary set of $F_a$ of its boundary is $F_b$. For example, we have the relations $P_{\rm o}(F_b)=P_{\rm o}(F_{s,3})-2k_4 \leq P(F_{s,3})-2k_4$ for some non-negative integer $k_3$: this is also a kind of fundamental exercises on topological theory of closed surfaces and presented more precisely in \cite[Our proof of Theorem 1]{kitazawa4}.
	\end{itemize}
	We can regard that the union of all handles here is regarded as the preimage ${{\tilde{f}}_{F_{a},F_{b}}}^{-1}([a+\epsilon,b])$ of the interval $[a+\epsilon,b]$ where $\epsilon>0$ is a sufficiently small number: the preimage ${{\tilde{f}}_{F_{a},F_{b}}}^{-1}(a+\epsilon)$ coincides with $F_a \times \{1\}$.
\end{itemize}
\begin{Def}
\label{def:1}
	In this situation, suppose the following.
\begin{itemize}
\item The copies $D_{3,j}$ are not chosen.
\item Each $D_{4,j}$ is realized as a small closed tubular neighborhood of the boundary of a smoothly embedded copy of $S^1 \tilde{\times} D^1${\rm :} the copy of $S^1 \tilde{\times} D^1$ is denoted by ${D_{4,j}}^{\prime} \subset F_a \times \{1\}$ as a submanifold.
\item The copies ${D_{4,j}}^{\prime} \subset F_a \times \{1\}$ are mutually disjoint.
\end{itemize}
In the situation, the function ${\tilde{M}}_{F_a,F_b}$ is said to be a function of the {\it class M}.
\end{Def}
\subsection{The proof.}
\label{subsec:2.3}
\begin{proof}[Our proof of Theorem \ref{thm:1}]
The value $P_{\rm o}(F_b)-P_{\rm o}(F_a)$ is shown to be even immediately. This comes from a fact on cobordisms: these manifolds $F_a$ and $F_b$ are cobordant and the disjoint union is the boundary $\partial {\tilde{M}}_{F_a,F_b}$.

It is sufficient to check $P_{\rm o}(F_b) \leq 3P(F_a)$: the function $-{\tilde{f}}_{F_{a},F_{b}}:{\tilde{M}}_{F_{a},F_{b}} \rightarrow \mathbb{R}$ explains the case $P_{\rm o}(F_a)<3P(F_b)$ affirmatively.

We abuse notation from our subsection \ref{subsec:2.2}.

The family of handles attached to $D_{4,j}$ decomposes a connected component $F_{a,j-1,{\rm c}}$ of the existing surface $F_{a,j-1}$ into a copy of the component $F_{a,j-1,{\rm c}}$ and a closed, connected and non-orientable surface $K^2$ with $P(K^2)=2$: we put $F_{a,0}:=F_{a} \times \{1\}=F_{a,0}$, the resulting whole surface obtained from $F_{a,j-1}$ is denoted by $F_{a,j}$, and the surface $K^2$ is also a copy of the so-called Klein Bottle. Here we have respected the definition of the function of the class M.
For example, by a fundamental argument, we have the relations $P(F_{s,1})=P(F_a)+2l_4$ and $P_{\rm o}(F_{s,1})=P_{\rm o}(F_a)$. We also have $0 \leq l_4 \leq P(F_a)$ from Proposition \ref{prop:1} (\ref{prop:1.2}).

We do not consider the family of handles attached to $D_{3,j}$ for any $1 \leq j \leq l_3$.

By subsection \ref{subsec:2.2}, we have the inequalities \\
$P_{\rm o}(F_b) \leq  P_{\rm o}(F_{s,3}) \leq P(F_{s,3}) \leq P(F_{s,1})=P(F_{s,2}) \leq P(F_a)+2P(F_a)=3P(F_a)$.
This completes the proof.

\end{proof}
\begin{Ex}
Let $F_a$ be a closed and orientable surface. If for a closed surface $F_b$, $P(F_b)>0$ and even, then we cannot solve Problem \ref{prob:1} affirmatively. As noted in Theorem \ref{thm:1} shortly, we have shown that we cannot solve the Morse function case affirmatively in \cite{kitazawa4}. See also \cite[Example 2]{kitazawa4}, where $F_a:=S^2$ and $F_b:={{\mathbb{R}}P}^2 \sqcup {{\mathbb{R}}P}^2$. 
\end{Ex}
We present Theorem \ref{thm:2}, presenting another simplest example for Theorem \ref{thm:1}.

\begin{Thm}
\label{thm:2}
Let $F_a$ be a closed, connected and non-orientable surface and $F_b:=F_a \sqcup {\sqcup}_{j=1}^{p} {\mathbb{R}P}^2$ where $p$ is an even integer satisfying $1 \leq p \leq 2P(F_a)$. Then Problem \ref{prob:1} is affirmatively solved. Furthermore, for each integer $\frac{p}{2} \leq p^{\prime} \leq P(F_a)$, we can have a desired function ${\tilde{f}}_{F_{a},F_{b}}:{\tilde{M}}_{F_{a},F_{b}} \rightarrow \mathbb{R}$ which is of the class $M$ and the set of all singular points of which is the disjoint union of the following.
\begin{itemize}
\item Exactly $p^{\prime}$ copies of the circle $S^1$.
\item Exactly $p^{\prime}$ singular points of index $2$ for the function.
\end{itemize}
\end{Thm}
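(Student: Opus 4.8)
The plan is to construct $\tilde f_{F_a,F_b}$ by the handle-attachment recipe of Subsection~\ref{subsec:2.2}, staying inside the class $M$: I will take $l_1=l_3=0$, take $l_4=p'$ families of $1$-handles (these produce the $p'$ circles of singular points) and $l_2=p'$ ordinary $2$-handles (these produce the $p'$ index-$2$ points), and attach all of them at a single level, which becomes $s$. Since $F_a$ is connected and each (family of) handle(s) is attached along a region meeting the part coming from $F_a$, both $\tilde M_{F_a,F_b}$ and the singular level set $\tilde f_{F_a,F_b}^{-1}(s)$ will be connected; so, once the level structure is set up, the only real issue is to make the top boundary surface equal to $F_b=F_a\sqcup(\mathbb{R}P^2)^{\sqcup p}$.

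For the round-handle stage: because $p'\le P(F_a)$, Proposition~\ref{prop:1}(\ref{prop:1.2}) supplies $p'$ mutually disjoint smoothly embedded copies $D'_{4,1},\dots,D'_{4,p'}$ of $S^1\tilde\times D^1$ in $F_a\times\{1\}$; let each $D_{4,j}$ be a thin annular neighborhood of $\partial D'_{4,j}$ (a $2$-sided circle, so indeed of the form $S^1\times D^1$), which also places us in the class $M$. Attaching the corresponding $p'$ families of $1$-handles and running the class-$M$ computation of the proof of Theorem~\ref{thm:1}, the surface so obtained is $F_{s,1}\cong F_a\sqcup(K^2)^{\sqcup p'}$: a diffeomorphic copy of $F_a$ together with $p'$ disjoint Klein bottles $K^2_j$, with $K^2_j$ containing a shrunk copy of $D'_{4,j}$, and with the singular set so far equal to the disjoint union of $p'$ circles $C_1,\dots,C_{p'}$.

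For the $2$-handle stage: inside $D'_{4,j}$ I choose a circle $\gamma_j$ isotopic there to $\partial D'_{4,j}$ but disjoint from $D_{4,j}$, and let $D_{2,j}$ be a thin annular neighborhood of $\gamma_j$; all of these are mutually disjoint and disjoint from the round-handle data. After the round-handle stage $\gamma_j$ lies on $K^2_j$, isotopic there to the circle along which the two Möbius halves of $K^2=\mathbb{R}P^2\sharp\mathbb{R}P^2$ are glued, so the $2$-surgery carried out by the $2$-handle on $D_{2,j}$ replaces $K^2_j$ by $\mathbb{R}P^2\sqcup\mathbb{R}P^2$ and touches nothing else. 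In the basic case $p'=p/2$ I do this for every $j$, turning the $p'=p/2$ Klein bottles into the $p$ required copies of $\mathbb{R}P^2$ while the copy of $F_a$ stays intact; the top boundary is then $F_a\sqcup(\mathbb{R}P^2)^{\sqcup p}=F_b$, and since $p\le 2P(F_a)$ is the same as $p/2\le P(F_a)$ this already gives the first assertion of the theorem. For $p/2<p'\le P(F_a)$ I would perform these $\mathbb{R}P^2$-splittings on only $p/2$ of the Klein bottles and then choose the other $p'-p/2$ circles $\gamma_j$ and their $2$-handles so that the remaining $2$-surgeries reorganize the Klein bottles still present without producing extraneous $S^2$- or torus-components and while leaving exactly $p$ copies of $\mathbb{R}P^2$ and an intact copy of $F_a$; making this last bookkeeping precise is the step I expect to be the main obstacle.

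It then remains to check the conditions of Problem~\ref{prob:1}: the image is $[a,b]$, $s$ is the unique singular value, $\tilde f_{F_a,F_b}^{-1}(a)\cong F_a$, $\tilde f_{F_a,F_b}^{-1}(b)\cong F_b$, $\partial\tilde M_{F_a,F_b}=\tilde f_{F_a,F_b}^{-1}(a)\sqcup\tilde f_{F_a,F_b}^{-1}(b)$, and $\tilde f_{F_a,F_b}^{-1}(s)$ is connected — all immediate from the construction and the connectedness of $F_a$. By construction each $D'_{4,j}$ is realized as in Definition~\ref{def:1}, so $\tilde f_{F_a,F_b}$ is of the class $M$, and its singular set is exactly the $p'$ circles $C_1,\dots,C_{p'}$ together with the $p'$ index-$2$ points of the $2$-handles. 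So the two parts requiring genuine work are the $2$-surgery bookkeeping when $p/2<p'\le P(F_a)$ and, to a lesser extent, spelling out connectedness of $\tilde f_{F_a,F_b}^{-1}(s)$ in the class-$M$ handle picture.
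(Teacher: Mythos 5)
Your construction of the base case is exactly the paper's: choose $p'$ disjoint circles with M\"obius band neighborhoods via Proposition \ref{prop:1} (\ref{prop:1.2}), take each $D_{4,j}$ to be an annular neighborhood of $\partial {D_{4,j}}^{\prime}$, attach the round $1$-handle families there, and attach each $2$-handle along an annulus $D_{2,j}$ boundary-parallel inside ${D_{4,j}}^{\prime}$, so that the $j$-th $2$-handle splits the Klein bottle produced by the $j$-th round handle into ${\mathbb{R}P}^2 \sqcup {\mathbb{R}P}^2$. This yields top boundary $F_a \sqcup ({\mathbb{R}P}^2)^{\sqcup 2p'}$ and hence settles the first assertion of Theorem \ref{thm:2} by taking $p'=p/2$, which is precisely what the paper does (including $l_1=l_3=0$ and the class-$M$ placement of Definition \ref{def:1}); your remarks on connectedness of the singular fiber and on verifying the conditions of Problem \ref{prob:1} also match the paper's (tacit) treatment.

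The step you flag as ``bookkeeping'' for $p/2 < p' \leq P(F_a)$ is, however, not a finishable gap: within the paper's own class-$M$ framework it cannot be done. With the singular set prescribed as exactly $p'$ circles and $p'$ index-$2$ points, you are forced to take $l_4=l_2=p'$ and $l_1=l_3=0$. The round-handle stage then produces $F_a \sqcup \bigsqcup_{j=1}^{p'} K^2_j$ (as in the proof of Theorem \ref{thm:1}), and each subsequent $2$-handle performs a circle surgery inside a single component: it can split a component or modify it, but it can never merge components, and the only way to remove a Klein bottle component without creating a sphere component (spheres are not allowed in $F_b$) is the separating-curve surgery giving two copies of ${\mathbb{R}P}^2$; surgeries in the copy of $F_a$ either create spheres or destroy that component. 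Hence every admissible choice forces the top boundary to be $F_a \sqcup ({\mathbb{R}P}^2)^{\sqcup 2p'}$, i.e. $p=2p'$; already component counting ($1+p' \leq 1+p$) kills the claimed range up to $P(F_a)$ whenever $P(F_a)>p$, and the low-genus cases $F_a \cong {\mathbb{R}P}^2, K^2$ do not rescue it. So your planned reorganization of the remaining Klein bottles ``without extraneous components'' does not exist. You should also know that the paper's own proof has the same scope: it constructs exactly the configuration above and therefore only establishes the case $p'=p/2$; the ``furthermore'' range $\frac{p}{2} \leq p' \leq P(F_a)$ in the statement of Theorem \ref{thm:2} appears to be an overstatement rather than something you failed to reach.
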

\begin{proof}
Here we abuse the notation and rule from our subsection \ref{subsec:2.2} again. 

We choose $p^{\prime}$ disjoint circles in $F_a \times \{1\}$ in Proposition \ref{prop:1} (\ref{prop:1.2}). 
By considering their closed tubular neighborhoods, we have exactly $p^{\prime}$ smooth submanifolds $D_{4,j}$ with the copies ${D_{4,j}}^{\prime}$ of the M\"obius band in Definition \ref{def:1}. 
Remember that each $D_{4,j}$ is chosen as a small collar neighborhood of the boundary $\partial {D_{4,j}}^{\prime}$.
The set $D_{4,j,{\rm o}}:=D_{4,j} \bigcup {D_{4,j}}^{\prime}$ is also a copy of the M\"obius band smoothly embedded in the surface $F_{a,j-1}$ and smoothly isotopic to ${D_{4,j}}^{\prime} \subset F_{a,j-1}$.
We can choose a smooth submanifold $D_{2,j} \subset F_{a,j-1}$ in the interior of each $D_{4,j,{\rm o}}$ in such a way that the following hold.
\begin{itemize}
\item The manifold $D_{2,j}$ is parallel to the small collar neighborhood $D_{4,j}$ of $\partial {D_{4,j}}^{\prime}$ in $F_{a,j-1}$ (where a suitable metric is given in $F_{a,j-1}$: the metric is not essential and here we do not need to understand the notion of parallel subsets in a Riemannian manifold rigorously). 
\item The manifold $D_{2,j}$ is also chosen in $D_{4,j,{\rm o}}-D_{4,j}$.
\end{itemize}
 We do not choose families of handles attached to $D_{1,j}$ or $D_{3,j}$ ($l_1=l_3=0$). We have a desired $3$-dimensional manifold ${\tilde{M}}_{F_a,F_b}$ and its boundary is regarded to be $F_a \sqcup F_b$. We can also have a desired Morse-Bott function ${\tilde{f}}_{F_{a},F_{b}}:{\tilde{M}}_{F_{a},F_{b}} \rightarrow \mathbb{R}$ corresponding naturally to these handles. This completes the proof.

\end{proof}
\section{Conflict of interest and Data availability.}
\noindent {\bf Conflict of interest.} \\
The author has worked at Institute of Mathematics for Industry (https://www.jgmi.kyushu-u.ac.jp/en/about/young-mentors/). This project is closely related to our present study. We thank them for their encouragements. The author is also a researcher at Osaka Central
Advanced Mathematical Institute (OCAMI researcher), supported by MEXT Promotion of Distinctive Joint Research Center Program JPMXP0723833165. He is not employed there. However, we also thank them for such an opportunity. \\
\ \\
{\bf Data availability.} \\
Essentially, data supporting our present study are here. The present paper is also seen as an essential extension of \cite{kitazawa4}. The author hit on an essential idea for the present paper after an earlier version of \cite{kitazawa4} was submitted to a refereed journal. The earlier version of \cite{kitazawa4} has been rejected after the first submission, with several positive comments and no essential errors being found and essentially, the content of present version of \cite{kitazawa4} is essentially same as that of the earlier version.

\end{document}